\newtheorem{theorem}{Theorem}[section]
\newtheorem{lemma}[theorem]{Lemma}
\newcommand{\jump}[1]{\ensuremath{[\![#1]\!]} }
\newcommand{\dint}[1]{\left\langle #1 \right\rangle}
\newcommand{\pderiv}[2]{\frac{\partial #1}{\partial #2}}
\newcommand{\G}[1]{ 
\dint{\phi, \pderiv{#1}{t} + \boldsymbol{u} \cdot \nabla #1}_{\Omega_k\times T^n} - \dint{\phi, \left(q^{n-1} - #1\right)}_{\Omega_k}\bigg|_{t^{n-1}}}
\title{Scalable ADER-DG Transport Method with Polynomial Order Independent CFL Limit}
\author[1,2]{Kieran Ricardo}
\author[1,3]{Kenneth Duru}
\affil[1]{Mathematical Sciences Institute, Australian National University, Canberra, Australia}
\affil[2]{ACCESS-NRI, Canberra, Australia}
\affil[3]{Department of Mathematical Sciences, University of Texas at El Paso, USA}
\date{}
\begin{document}
\maketitle

\begin{abstract}

Discontinuous Galerkin (DG) methods are known to suffer from increasingly restrictive explicit time-step constraints as the polynomial order increases, limiting their efficiency at high orders for explicit time-stepping schemes. In this paper, we introduce a novel \emph{locally implicit}, but \emph{globally explicit} ADER-DG scheme designed for transport-dominated problems. The method achieves a maximum stable time step governed by an element-width based CFL condition that is independent of the polynomial degree. By solving a set of element-local implicit problems at each time step, our approach more effectively utilises the domain of dependence. As a result, our method remains stable for CFL numbers up to $\approx 1/\sqrt{d}$ in $d$ spatial dimensions. We provide a rigorous stability proof in one dimension, and extend the analysis to two and three dimensions using a semi-analytical von Neumann stability analysis. The accuracy and convergence of the method are demonstrated through numerical experiments for both linear and nonlinear test cases, including numerical simulations of a transport problem on a  cubed sphere 2D manifold embedded in 3D.
\end{abstract}


\section{Introduction}
Transport models are a fundamental component of many fluid dynamics solvers. In geophysical fluid dynamics, models may include dozens of advected or transported quantities, motivating the need for accurate, efficient, and scalable transport algorithms \cite{BRADLEY2024113034,LauritzenThurbon2012,gmd-8-1299-2015,gmd-15-6285-2022}. Discontinuous Galerkin (DG) methods have recently gained popularity for modelling geophysical flows due to their scalability, high-order accuracy, and strong conservation properties \cite{kelly2012continuous, marras2016review, dennis2005towards, waruszewski2022entropy, ricardo2024thermodynamic}. However, a well-known limitation of high-order DG methods, common to many finite element schemes, is that the maximum stable explicit time step decreases quadratically with the polynomial degree \cite{karniadakis2005spectral}. This constraint may render high-order methods impractical for large-scale or real-time simulations.

One approach to circumvent this time-step restriction is to use fully implicit time integration, but such methods typically involve global linear or nonlinear solves and increased communication overhead per time step, which can limit scalability on modern high-performance computing systems. Our focus is on the development of communication-avoiding schemes that remain efficient at scale. In this work, we introduce a novel \emph{locally implicit}, but \emph{globally explicit} ADER-DG method that removes the dependence of the maximum stable time step on polynomial order while minimizing inter-element communication.

The DG method was originally developed for neutron transport problems \cite{reed1973triangular} and later extended to a broader class of hyperbolic conservation laws and non-conservative hyperbolic partial differential equations \cite{cockburn1989tvb, cockburn1990runge, cockburn1991runge, cockburn1998runge, cockburn1989tvb1}. A widely used strategy for solving time-dependent hyperbolic problems with DG is the method of lines, in which DG is used for spatial discretization and a strong stability preserving Runge–Kutta (SSP-RK) method is applied for time integration \cite{shu1988efficient}.

Typically, a DG method with polynomial degree $k-1$—achieving order $k$ accuracy in space—is combined with an SSP-RK scheme of order $k$ to obtain overall $k$th-order accuracy in both space and time. In SSP-RK methods, elements communicate with their neighbors via numerical fluxes at each Runge–Kutta stage.
However, a fundamental limitation arises from the fact that $k$-stage, $k$th-order explicit Runge–Kutta methods do not exist for $k > 4$ \cite{butcher1964runge}. At each Runge-Kutta stage elements this limits the efficiency of the method-of-lines approach for DG methods with polynomial degree $> 3$.

The ADER-DG, or locally implicit DG, method overcomes this limitation by using a DG discretisation in both space and time \cite{toro2001godunov, toro2002solution,titarev2002ader,titarev2005ader, toro2006derivative, dumbser2008finite, dumbser2008unified, gassner2011explicit,DURU2022114386}. Regardless of the polynomial order or the problem’s dimensionality, the ADER-DG method requires only one round of communication across element interfaces. Due to its arbitrary order of accuracy and low communication cost, the ADER-DG method has been successfully applied to a wide range of problems \cite{fambri2017space, gaburro2023high, pelties2014verification, breuer2016petascale, breuer2022next, rannabauer2018ader, popov2025high}.

For all explicit schemes applied to hyperbolic problems, the maximum stable time step is constrained by the Courant–Friedrichs–Lewy (CFL) condition \cite{courant1928partiellen}. This condition ensures that the numerical domain of dependence includes the physical domain of dependence, thereby maintaining stability. In 1D the CFL condition can be expressed as
$$\nu := \frac{c\Delta t}{\Delta x} \leq 1,$$
where $\Delta t$ is the time step, $\Delta x$ is grid spacing, $c$ is the fastest wave speed, and $\nu$ is the CFL number. In practice, both explicit RK-DG and ADER-DG methods have maximum stable time steps that are well below the CFL limit \cite{liu2008l2, guthrey2019regionally}. Table 1 and 2 illustrate the maximum stable CFL number for explicit Runge-Kutta DG and standard ADER-DG schemes in 1D, showing that it decreases sharply with polynomial order, making high order methods computationally inefficient.


\begin{table}
\caption{1D Runge-Kutta DG CFL condition numbers \cite{liu2008l2}}
\centering
\begin{tabular}{||l||l|l|l|l|l|l|}
\hline
Polynomial order     & 0   & 1    & 2    & 3   \\ \hline
CFL cond. & 1.0 & 0.33 & 0.13 & 0.1 \\ \hline
\end{tabular}
\end{table}

\begin{table}
\caption{1D ADER-DG CFL condition numbers \cite{guthrey2019regionally}}
\centering
\begin{tabular}{||l||l|l|l|l|l|l|}
\hline
Polynomial order     & 0   & 1    & 2    & 3  & 4 & 5\\ \hline
CFL cond.   & 1.0 & 0.33 & 0.17 & 0.1 & 0.07 & 0.05\\ \hline
\end{tabular}
\end{table}

A recent approach to mitigating this limitation is the regionally implicit DG method \cite{guthrey2019regionally}. This method achieves a CFL number of 1, independent of the polynomial order, while requiring only two communication steps per time step. At each time step, and for each element, an implicit problem is solved over a small region consisting of all neighbouring elements, specifically, $3^d$ elements in spatial dimension $d$, to compute a local predictor. The standard ADER-DG corrector step is then applied. Although this approach significantly increases the maximum stable time step compared to the {standard ADER-DG method and maintains low communication overhead, the resulting implicit problem may be relatively large and computationally expensive to solve.

In this paper, we propose an alternative locally implicit ADER-DG method for transport problems, designed to improve efficiency on high-performance computing systems. Unlike the regionally implicit approach, our method solves multiple single-element implicit problems per time step. In $d$ spatial dimensions, this involves solving $2^d$ single-element implicit problems per element and requires $d$ rounds of communication between face-sharing neighbors. Although the method results in a lower maximum stable CFL number, $1 / {\sqrt{d}}$ in $d$ spatial dimensions, compared to the regionally implicit DG method each implicit problem is element local and can be solved efficiently using standard iterative ADER-DG techniques. This smaller problem size may lead to better overall efficiency for certain applications. 

The rest of the paper is organized as follows: Section 2 introduces the notation and the standard ADER-DG method. Section 3 presents our novel locally implicit DG method. In Section 4, we prove our 1D method is numerically stable for element-size CFL numbers $\leq 1$. Section 5 conducts a numerical von Neumann analysis to determine the maximum stable time step of our method in 1D, 2D, and 3D. Section 6 presents the results of our numerical experiments, where we verify the convergence of our method, including convergence tests for the advection equation on a  cubed sphere 2D manifold embedded in 3D. Finally, our conclusions from this study are presented in
section 7.

\section{Space-time discontinuous Galerkin method}

In this section, we introduce the fully implicit space-time discontinuous Galerkin method \cite{dumbser2016space} and the standard locally implicit ADER-DG method \cite{dumbser2008finite, dumbser2008unified, gassner2011explicit} for the linear advection equation in a periodic domain $\Omega$
\begin{equation}
    \pderiv{q}{t} + \boldsymbol{u}\cdot \nabla q = 0,
\end{equation}
with the smooth and periodic initial condition $q(x, 0) = f(x)$.
Here, $q$ represents the advected quantity and 
$\boldsymbol{u}$$\in \mathbb{R}^d$ is the advection velocity vector. 

\subsection{Implicit space-time discontinuous Galerkin method}

The computational domain is discretised into spatial elements $\Omega_k$, which are then extended into space-time elements $\Omega_k \cross T^n$, where $T^n = [t^{n-1}, t^n]$ denotes the $nth$ time interval. Within each space-time element, the solution 
$q_k^n(\boldsymbol{x}, t)$ is approximated by a finite polynomial expansion in terms of basis functions:
\begin{equation}
q_k^n=\sum_i{\left(q_k^n\right)_i \phi_i(\boldsymbol{x}, t)},
\end{equation}
where $\phi_i(\boldsymbol{x}, t)$ are typically chosen as tensor-product polynomial basis functions in space and time. Note that we will drop the subscript $k$ when it is clear from context.

The implicit space-time DG method \cite{dumbser2016space} follows the standard DG framework, extended to the space-time setting. The governing partial differential equation is multiplied by each basis function $\phi_i$,  integrated over the space-time element $\Omega_k \cross T^n$, and coupled to neighbouring elements via numerical fluxes. This leads to the weak form:
{
\scriptsize
\begin{equation}
    \dint{\phi, \pderiv{q^n}{t} +  \boldsymbol{u}\cdot \nabla q^n}_{\Omega_k\cross T^n} + \dint{\phi,  \left(\hat{q}^n-q^n\right)\boldsymbol{u}\cdot \boldsymbol{n}}_{\partial\Omega_k \cross T^n} -  \dint{\phi, \left(q^{n-1} -  q^n\right)}_{\Omega_k}\bigg|_{t^{n-1}} = 0,
    \label{eq:imp-ader}
\end{equation}
}
where $\dint{\cdot, \cdot}$ denotes discrete inner products where we have approximated the integrals by a Gauss-type quadrature, and $\hat{q}^n$ denotes the upwind value of $q^n$ at the element interface. Note that upwind numerical fluxes are employed both at spatial interfaces and at the temporal boundary $t = t^{n-1}$, ensuring the discretisation is stable and consistent for arbitrarily large time-steps. 

\begin{theorem}
    The implicit ADER-DG method \eqref{eq:imp-ader} with constant advection velocity vector $\boldsymbol{u}$ is unconditionally stable, that is 
    \begin{equation}
    \begin{split}
        &\sum_k \dint{\tfrac{1}{2}\left(q^n\right)^2}_{\Omega_k}\bigg|_{t^n} -\dint{\tfrac{1}{2}\left(q^{n-1}\right)^2}_{\Omega_k}\bigg|_{t^{n-1}} = \\ -&\sum_k \left(\dint{\tfrac{1}{4}|\boldsymbol{u}\cdot \boldsymbol{n}|\jump{q^n}^2}_{\partial\Omega_k \cross T^n} + \dint{\tfrac{1}{2}\left(q^n - q^{n-1}\right)^2}_{\Omega_k }\bigg|_{t^{n-1}}\right)  \leq 0.
        \end{split}
        \end{equation}
\end{theorem}
Although this scheme is unconditionally stable, it requires solving a global implicit problem at every time step, which increases computational complexity and communication overhead compared to explicit methods and can limit scalability on modern high-performance computing systems. An  alternative is the standard explicit ADER-DG method, which mitigates this issue by requiring an element locally implicit solve at the predictor stage and a global explicit solve at the predictor stage, consequently leads to only a single communication round per time step.

\subsection{Standard ADER discontinous Galerkin method}

The key idea of the ADER-DG method \cite{dumbser2008finite, dumbser2008unified, gassner2011explicit} is to solve a locally implicit problem in each element to obtain a predictor $\tilde{q}$, and then use this predictor to calculate the spatial numerical fluxes and all other spatial terms in \eqref{eq:imp-ader}.

The element local predictor $\tilde{q}$ is given by
\begin{equation}
	\dint{\phi, \pderiv{\tilde{q}}{t} + \boldsymbol{u} \cdot \nabla \tilde{q}}_{\Omega_k\times T^n} - \dint{\phi, \left(q^{n-1} - \tilde{q}\right)}_{\Omega_k}\bigg|_{t^{n-1}} = 0.
    \label{eq:exp-ADER-pred}
\end{equation}
This is then used to calculate all spatial terms in \eqref{eq:imp-ader}, giving $q^n$ as
\begin{equation}
    \dint{\phi, \pderiv{q^n}{t} + \boldsymbol{u}\cdot \nabla \tilde{q}}_{\Omega_k\cross T^n} + \dint{\phi,  \left(\hat{\tilde{q}}-\tilde{q}\right)\boldsymbol{u}\cdot \boldsymbol{n}}_{\partial\Omega_k \cross T^n} -  \dint{\phi, \left(q^n - q^{n-1}\right)}_{\Omega_k}|_{t=t^n} = 0.
    \label{eq:exp-ader}
\end{equation}

The element local implicit problem $\eqref{eq:exp-ADER-pred}$ is solved iteratively by 
\begin{equation}
	\dint{\phi, \pderiv{\tilde{q}^{i+1}}{t}}_{\Omega_k\times T^n} + \dint{\phi, \tilde{q}^{i+1}}_{\Omega_k}\bigg|_{t^{n-1}} = -\dint{\phi,\boldsymbol{u} \cdot \nabla \tilde{q}^i}_{\Omega_k\times T^n} + \dint{\phi, q^{n-1}}_{\Omega_k}\bigg|_{t^{n-1}},
\end{equation}
where the superscript $i$ denotes the $i$-th iteration. This iterative procedure also converges for non-linear problems. In the next section, we will use a similar iterative approach for all locally implicit problems in our method.

Although the ADER-DG method requires only a single communication round per time step and therefore scales well on modern high-performance computing systems, its maximum stable time step decreases quadratically with polynomial order, reducing the efficiency of high-order methods. Our approach seeks to eliminate this time step restriction while ensuring scalability.

\section{Novel locally implicit ADER-DG method}

In this section, we introduce our locally implicit ADER-DG method for 1D, 2D, and 3D transport problems. Compared to the standard locally implicit DG method, our scheme allows for larger time steps, independent of the polynomial order, minimizing the number of communication steps required for higher-order methods.

\subsection{Notation}

In two spatial dimensions, we use quadrilateral elements; in three dimensions, we use hexahedral elements. For both space and time, we employ tensor-product Gauss–Lobatto–Legendre (GLL) quadrature, along with tensor-product Lagrange polynomial basis functions collocated with the GLL points.

We define the reference spatial element as $\tilde{\Omega} = [-1, 1]^d$ in $d$ spatial dimensions, with reference coordinates denoted by $\boldsymbol{\xi} = (\xi_1, \dots, \xi_d)$. Each physical element $\Omega_k$ is the image of an invertible, element-specific mapping $\boldsymbol{r}_k(\boldsymbol{\xi}) : \tilde{\Omega} \to \Omega_k$ from the reference element to the physical domain.

In dimensions $d=2$ and $d=3$, we must explicitly identify the portions of the element boundary over which integrals are evaluated. For this purpose, we define the element face corresponding to the boundary where $\xi_i = \pm 1$ as:
$$\partial \Omega_k^{\xi_i} = \{\boldsymbol{r}_k(\boldsymbol{\xi}) : \xi_i = \pm 1 \}.$$

To incorporate the time dimension, we define the reference time interval as $\tilde{T} = [-1, 1]$ and denote the reference time coordinate by $\tau \in \tilde{T}$. The reference space-time element is then given by the Cartesian product $\tilde{\Omega} \times \tilde{T}$. Integration in time is carried out using the same GLL quadrature strategy as in space.


\subsection{1D}

In 1D, our method employs the same predictor as the standard locally implicit DG method \eqref{eq:exp-ADER-pred}
\begin{equation}
	\G{\tilde{q}} = 0.
\end{equation}

During the corrector phase, the predictor is used solely to compute the numerical fluxes, while all other terms are treated implicitly
{
\scriptsize
\begin{equation}
	\G{q^n} =- \dint{\phi,\boldsymbol{u} \cdot \boldsymbol{n}\left(\hat{\tilde{q}}   - q^{n}\right)}_{\partial \Omega_k\times T^n}.
\end{equation}
}
This approach differs from the standard ADER-DG corrector, which uses the predictor in both the volume and flux terms. Intuitively, for time steps below the element CFL limit, the fluxes at the interface of two elements should depend only on the two elements themselves, since information from the wider domain has not yet propagated to the interface. This allows us to construct accurate and stable fluxes for an element CFL of up to 1.


\subsection{2D}

A straightforward extension of our 1D method to 2D is numerically unstable. Intuitively, this instability arises because the fluxes at the
$x$-interface now depend on neighbouring elements in the $y$-direction. To address this issue, we adopt a two-step procedure to obtain numerical fluxes for the corrector: first, we apply our 1D method in the $x$-direction to obtain fluxes for $y$ interfaces; second, we apply the same method in the $y$-direction to obtain fluxes at the $x$ interfaces. These $x$-flux and $y$-flux predictors are independent and depend only on the initial predictor, therefore our 2D method has 2 communication stages and 4 element local implicit solves.  Figure \ref{fig:algorithm-2D} summarises the overall 2D algorithm.

\begin{figure}[!hbtp]\begin{center}
	\includegraphics[width=0.5\textwidth]{./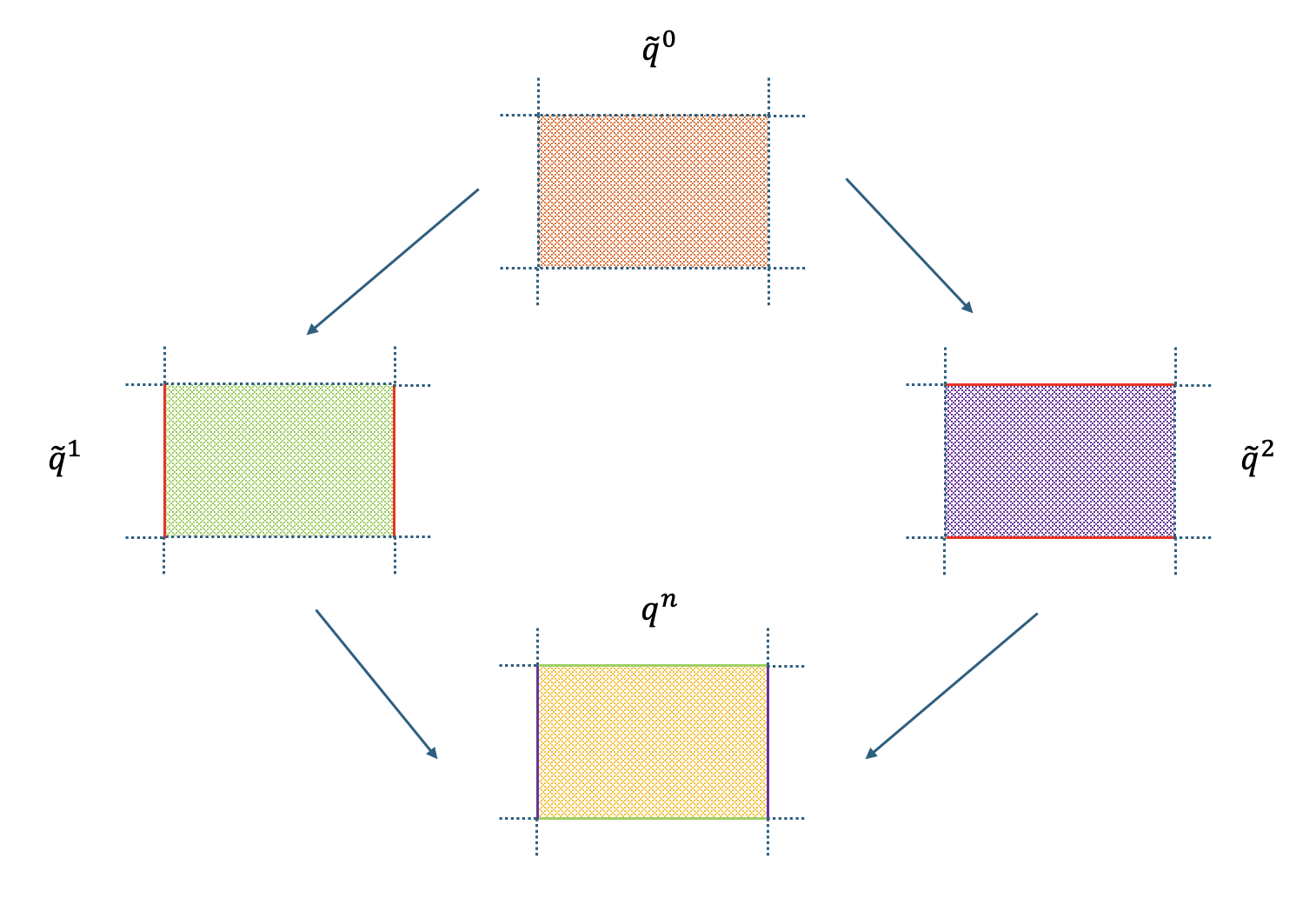}
 \caption{New locally implicit ADER-DG method in two spatial dimensions. A dotted boundary indicates no numerical flux, and coloured solid boundaries denote upwind numerical fluxes calculated with the predictor of the corresponding colour.}
  \label{fig:algorithm-2D}
\end{center}
\end{figure}

The first predictor is the same as in the 1D case
\begin{equation}
    \G{\tilde{q}^0} = 0.
\end{equation}
The 2D method introduces introduces 2 new predictors, $\tilde{q}^1$ and $\tilde{q}^2$, which use numerical fluxes from the first predictor $\tilde{q}^0$ along the $\xi_1=\pm 1$ and $\xi_2=\pm 1$ boundaries respectively. These predictors are given by
{
\scriptsize
\begin{equation}
    \G{\tilde{q}^1} =  - \dint{\phi,\boldsymbol{u} \cdot \boldsymbol{n}\left(\hat{\tilde{q}}^0   - \tilde{q}^1\right)}_{\partial \Omega_k^{\xi_1}\times T^n},
\end{equation}
}
{
\scriptsize
\begin{equation}
    \G{\tilde{q}^2} =  - \dint{\phi,\boldsymbol{u} \cdot \boldsymbol{n}\left(\hat{\tilde{q}}^0   - \tilde{q}^2\right)}_{\partial \Omega_k^{\xi_2}\times T^n}.
\end{equation}
}
Finally, the corrector step is computed as
{
\scriptsize
\begin{equation}
\begin{split}
    \G{q^n}= &- \dint{\phi_h,\boldsymbol{u} \cdot \boldsymbol{n}\left(\hat{\tilde{q}}^2   - q^{n}\right)}_{\partial \Omega_k^{\xi_1}\times T}\\
    &- \dint{\phi,\boldsymbol{u} \cdot \boldsymbol{n}\left(\hat{\tilde{q}}^1   - q^{n}\right)}_{\partial \Omega_k^{\xi_2}\times T^n}.
    \end{split}
\end{equation}
}
\subsection{3D}
In 3D, the $x$ fluxes now depend on neighbouring elements in the $y$ and $z$ directions. To address this, we apply the 2D method in the $y$- and $z$-directions to obtain $x$-fluxes for the final corrector, and similarly use the same approach for calculating the $y$- and $z$-fluxes. Since some predictors are reused multiple times, the method only requires 3 communication steps and 8 element local implicit solves. 

The first four predictors follow the same form as the 2D predictors
\begin{equation}
    \G{\tilde{q}^0} = 0,
\end{equation}
{
\scriptsize
\begin{equation}
	\G{\tilde{q}^i} =  - \dint{\phi,\boldsymbol{u} \cdot \boldsymbol{n}\left(\hat{\tilde{q}}^0   - \tilde{q}^i\right)}_{\partial \Omega_k^{\xi_i}\times T^n},
\end{equation}
}
for $i=1,2,3$. To obtain the final three predictors, $\tilde{q}^{i,j}$ for $i,j=(1,2),(1,3),(2,3)$, we perform the final 2D corrector step giving
{
\scriptsize
\begin{equation}
\begin{split}
	 \G{\tilde{q}^{i,j}} =  &- \dint{\phi,\boldsymbol{u} \cdot \boldsymbol{n}\left(\hat{\tilde{q}}^j   - \tilde{q}^{i,j}\right)}_{\partial \Omega_k^{\xi_i}\times T^n} \\
     &- \dint{\phi,\boldsymbol{u} \cdot \boldsymbol{n}\left(\hat{\tilde{q}}^i   - \tilde{q}^{i,j}\right)}_{\partial \Omega_k^{\xi_j}\times T^n}.
\end{split}
\end{equation}
}
For example, $\tilde{q}^{1,2}$ is equivalent to performing the 2D method in the $\xi_1$ and $\xi_2$ directions, yielding stable and high order accurate fluxes at the $\xi_3 = \pm 1$ boundaries. With these predictors in place, we can construct stable approximations for all element interface fluxes and compute the final corrector as
{
\scriptsize
\begin{equation}
\begin{split}
\G{q^n_k} = & - \dint{\phi,\boldsymbol{u}_h \cdot \boldsymbol{n}\left(\hat{\tilde{q}}^{2,3}  - q^{n+1}_h\right)}_{\partial \Omega_k^{\xi_1}\times T}  \\  &- \dint{\phi,\boldsymbol{u} \cdot \boldsymbol{n}\left(\hat{\tilde{q}}^{1,3}   - q^{n+1}\right)}_{\partial \Omega_k^{\xi_2}\times T} \\  &- \dint{\phi_h,\boldsymbol{u} \cdot \boldsymbol{n}\left(\hat{\tilde{q}}^{1,2}   - q^{n+1}\right)}_{\partial \Omega_k^{\xi_3}\times T^n}.
   \end{split}
\end{equation}
}

\section{1D Stability Analysis}

In this section, we present theoretical results that demonstrate the stability of our 1D method for a given constant CFL number and polynomial order. Specifically, for order $p$ we derive a $(p+1)\times (p+1)$ eigenvalue problem and prove that a maximum eigenvalue of 1 implies stability. We evaluate stability for polynomial orders up to $p \leq 14$ and across a range of CFL numbers, finding that our method remains stable for CFL values up to 1.

While these results are based on stability evaluations for specific CFL numbers, and thus only confirm stability for the tested values, we make the reasonable assumption that the maximum amplification factor increases with the CFL number. Based on this assumption, we conclude that our 1D method is stable for CFL values up to 1.0 for polynomial orders $\leq 14$. Although we suspect that higher-order methods are also stable, achieving conclusive results may require higher precision in the numerical linear algebra due to poorly conditioned matrices for high order operators.

For the proofs we introduce additional notation. Let $x_k$ denote the left boundary point of cell $k$, s.t. $\Omega_k = [x_k, x_{k+1}]$. This allows us to distinguish between integrals along the left an right element boundaries. For example, $\dint{a,b}_{\tilde{T}}|_{x_k}$ approximates the left time boundary integral $\int_{\tau=-1}^{\tau=1}{a(x=x_k)b(x=x_k)d\tau}$. 

We also transform the 1D method into the reference element, and scale out a factor of $\Delta x$. Now the geometry dependence is entirely contained within the CFL number $\nu$. In reference coordinates our 1D method becomes
\begin{equation}
    \dint{\phi, \pderiv{\tilde{q}_k}{\tau}}_{\tilde{\Omega} \times \tilde{T}} + \dint{\phi, \nu \pderiv{\tilde{q}_k}{\xi}}_{\tilde{\Omega} \times \tilde{T}} - \dint{\phi, q^{n-1}_k - \tilde{q}_k}_{\tilde{\Omega}}\bigg|_{t^{n-1}} = 0,
    \label{eq:ref-1D-1}
\end{equation}
\begin{equation}
    \dint{\phi, \pderiv{q^{n}_k}{\tau}}_{\tilde{\Omega} \times \tilde{T}} + \dint{\phi, \nu \pderiv{q^{n}_k}{\xi}}_{\tilde{\Omega} \times \tilde{T}} - \dint{\phi, q^{n-1}_k - q^{n}_k}_{\tilde{\Omega}}\bigg|^{t^n} + \dint{\phi, \nu \left(\hat{\tilde{q}} - q^{n}_k\right)}_{\tilde{T}}\bigg|^{x_{k+1}}_{x_k} = 0.
    \label{eq:ref-1D-2}
\end{equation}
We also assume without loss of generality that $u \geq 0$. Consequently, an upwind value $\hat{\tilde{q}}|_{x_k}=\tilde{q}_{k-1}|_{x_k}$ represents the value on the left side of the element boundary, therefore \eqref{eq:ref-1D-2} can be written as
\begin{equation}
\begin{split}
    \dint{\phi, \pderiv{q^{n}_k}{\tau}}_{\tilde{\Omega} \times \tilde{T}} + \dint{\phi, \nu \pderiv{q^{n}_k}{\xi}}_{\tilde{\Omega} \times \tilde{T}} - &\dint{\phi, q^{n-1}_k - q^{n+1}_k}_{\tilde{\Omega}}\bigg|_{t^{n-1}} + \dint{\phi, \nu \left(\tilde{q}_k - q^{n}_k\right)}_{\tilde{T}}\bigg|_{x_{k+1}} \\
    &- \dint{\phi, \nu \left(\tilde{q}_{k-1} - q^{n}_k\right)}_{\tilde{T}}\bigg|_{x_k}= 0.
    \label{eq:ref-1D-3}
    \end{split}
\end{equation}

\subsection{Analysis}

We begin by expressing the energy change after one step of the 1D method as a function of two key quantities: the difference between the corrector and predictor at time $t^n$, and the jump in the predictor across spatial element boundaries. This formulation enables the construction of a stability eigenvalue problem. This result is presented in Theorem \ref{theorem:energy-1-step}. To derive it, we first establish two preliminary lemmas.

We start by showing that the predictor recovers the solution from the previous time step at $t^{n-1}$. This result is formalised in the following lemma.
\begin{lemma}
    At the beginning of the time step, $t^{n-1}$, the predictor $\tilde{q}_k$ equals the previous solution $q^{n-1}$, that is $\tilde{q}_k(\xi, \tau=-1)=q^{n-1}_k(\xi, \tau=1)$.
\end{lemma}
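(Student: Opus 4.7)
The approach is to construct an explicit candidate for the predictor and then invoke uniqueness of the element-local linear system. Equation~\eqref{eq:ref-1D-1} is the weak form of the constant-coefficient advection equation $\partial_\tau \tilde{q}_k + \nu \partial_\xi \tilde{q}_k = 0$ on the reference space--time cube $\tilde{\Omega}\times\tilde{T}$, with the initial datum $q^{n-1}_k$ imposed weakly at $\tau=-1$ through an upwind temporal flux. Its characteristic solution is the simple shift
\[
\tilde{q}_\star(\xi,\tau) \;:=\; q^{n-1}_k\bigl(\xi - \nu(\tau+1)\bigr),
\]
where $q^{n-1}_k$ is understood as the polynomial extension in $\xi$ of the previous step's solution on $\Omega_k$.

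The first step is to check that $\tilde{q}_\star$ lies in the tensor-product polynomial trial space. Because $q^{n-1}_k$ is a polynomial of degree at most $p$ in $\xi$ and $\xi - \nu(\tau+1)$ is affine in $(\xi,\tau)$, the composition $\tilde{q}_\star$ is a polynomial of degree at most $p$ in each of $\xi$ and $\tau$ and is therefore expressible in the basis $\{\phi_i(\xi)\psi_j(\tau)\}$. The second step is to substitute $\tilde{q}_\star$ into \eqref{eq:ref-1D-1}: direct differentiation gives $\partial_\tau \tilde{q}_\star + \nu \partial_\xi \tilde{q}_\star \equiv 0$ pointwise on $\tilde{\Omega}\times\tilde{T}$, so the two volume GLL quadratures vanish against every test function $\phi$; and since $\tilde{q}_\star(\xi,-1) = q^{n-1}_k(\xi)$ exactly, the temporal jump term $-\dint{\phi, q^{n-1}_k - \tilde{q}_\star}_{\tilde{\Omega}}$ evaluated at $t^{n-1}$ also vanishes. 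Hence $\tilde{q}_\star$ solves the discrete predictor equation.

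Finally, \eqref{eq:ref-1D-1} is a square $(p+1)^2 \times (p+1)^2$ linear system whose unique solution is realised as the fixed point of the iterative scheme introduced at the end of Section~2.2. By uniqueness, $\tilde{q}_k = \tilde{q}_\star$, and evaluating at $\tau=-1$ yields $\tilde{q}_k(\xi,-1) = q^{n-1}_k(\xi) = q^{n-1}_k(\xi,\tau=1)$, the latter denoting the previous step's polynomial expressed in its own reference frame. The main subtlety is the uniqueness invocation: existence of $\tilde{q}_\star$ already guarantees solvability of the square system, and uniqueness is the standard well-posedness property of the implicit ADER-DG predictor, reflected in the convergence of the iterative solver.
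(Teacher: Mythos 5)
Your proposal is correct and follows essentially the same route as the paper: both construct the shifted polynomial $q^{n-1}_k(\xi-\nu(\tau+1))$ (the paper writes it as $\sum_i q^{n-1}_k(\xi_i,\tau=1)\, l_i(\xi-\nu(\tau+1))$), verify that it satisfies the PDE pointwise and the temporal jump condition exactly, and conclude it is the predictor. You are slightly more careful than the paper in explicitly checking membership in the tensor-product trial space and in invoking uniqueness of the square element-local system — a step the paper leaves implicit but which is genuinely needed to identify the constructed function with $\tilde{q}_k$.
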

\begin{proof}
    Let $\xi_i$ be the GLL quadtrature points and $l_i$ be the Lagrange polynomials which interpolate these points. The function \begin{equation}
        \tilde{q}_k = \sum_{i}{q^{n-1}_k\left(\xi=\xi_i, \tau=1\right)l_i\left(\xi  -\nu(\tau + 1)\right)}
    \end{equation} satisfies both 
    \begin{equation}
        \dint{\phi, \pderiv{\tilde{q}_k}{t} + \nu\pderiv{\tilde{q}_k}{x}}_{\tilde{\Omega} \times \tilde{T}} = 0
    \end{equation}
    and 
    \begin{equation}
    \tilde{q}_k(\xi, \tau=-1)=\sum_i{q^{n-1}_k\left(\xi=\xi_i, \tau=1\right)l_i\left(\xi\right)} = q^{n-1}_k(\xi, \tau=1),
    \end{equation}
    and therefore also satisfies 
    \begin{equation}
        \dint{\phi, \pderiv{\tilde{q}_k}{t} + \nu\pderiv{\tilde{q}_k}{x}}_{\tilde{\Omega} \times \tilde{T}} - \dint{\phi, q^{n-1}_k - \tilde{q}_k}_{\tilde{\Omega}}\bigg|_{t^{n-1}} = 0.
    \end{equation}
\end{proof}

Lemma 4.1 allows us to formulate the evolution of the difference between the predictor and the corrector, $\psi_k^n=q^{n}_k - \tilde{q}_k$, almost entirely in terms of $\psi_k^n$. Taking the difference between \eqref{eq:ref-1D-3} and \eqref{eq:ref-1D-1}, and applying Lemma 4.1 yields
{
\scriptsize
\begin{equation}
    \dint{\phi, \pderiv{\psi_k^n}{\tau}}_{\tilde{\Omega} \times \tilde{T}} + \dint{\phi, \nu \pderiv{\psi_k^n}{\xi}}_{\tilde{\Omega} \times \tilde{T}} + \dint{\phi, \psi_k^n}_{\tilde{\Omega}}\bigg|_{t^{n-1}} - \dint{\phi, \nu \psi_k^n}_{\tilde{T}}\bigg|_{x_{k+1}} = \dint{\phi, \nu\left(\tilde{q}_{k-1} - q^{n}_{k}\right)}_{\tilde{T}}\bigg|_{x_k}.
    \label{eq:pred-corr-diff-1D-1}
\end{equation}
}
This can also be written as
{
\scriptsize
\begin{equation}
    \dint{\phi, \pderiv{\psi_k^n}{\tau}}_{\tilde{\Omega} \times \tilde{T}} + \dint{\phi, \nu \pderiv{\psi_k^n}{\xi}}_{\tilde{\Omega} \times \tilde{T}} + \dint{\phi, \psi_k^n}_{\tilde{\Omega}}\bigg|_{t^{n-1}} - \dint{\phi, \nu \psi_k^n}_{\tilde{T}}\bigg|^{x_{k+1}}_{x_k} = \dint{\phi, \nu\left(\tilde{q}_{k-1} - \tilde{q}_{k}\right)}_{\tilde{T}}\bigg|_{x_k}.
    \label{eq:pred-corr-diff-1D-2}
\end{equation}
}

Various boundary integrals of $\left(\psi_k^n\right)^2$ arise in our analysis. The following lemma establishes a useful relationship between these boundary integrals.
\begin{lemma} Consider the evolution equation defined by \eqref{eq:pred-corr-diff-1D-1}. We have
{
\scriptsize
\begin{equation}
\begin{split}
    \dint{\tfrac{1}{2}\nu \left(\psi_k^n\right)^2}_{\tilde{T}}\bigg|_{x_{k+1}} - \dint{\tfrac{1}{2}\left(\psi_k^n\right)^2}_{\tilde{\Omega}}\bigg|_{t^{n-1}} &= \dint{\tfrac{1}{2}\left(\psi_k^n\right)^2}_{\tilde{\Omega}}\bigg|_{t^{n}}  - \dint{\psi_k^n, \nu\left(\tilde{q}_{k-1} - q^{n}_k\right)}_{\tilde{\Omega}}\bigg|_{x_k} \\
    &- \dint{\tfrac{1}{2}\nu\left(\psi_k^n\right)^2}_{\tilde{\Omega}}\bigg|_{x_k}
    \end{split}
\end{equation}
}
\end{lemma}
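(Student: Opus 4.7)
The plan is to test the identity \eqref{eq:pred-corr-diff-1D-1} with the particular choice $\phi = \psi_k^n$ and then invoke the summation-by-parts (SBP) property of the GLL-collocated discretisation to convert the two volume derivative terms into squared boundary quantities. This is the standard energy-estimate route used for DG stability identities.

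First I would substitute $\phi = \psi_k^n$ into \eqref{eq:pred-corr-diff-1D-1}. This yields two quadratic volume terms $\dint{\psi_k^n, \pderiv{\psi_k^n}{\tau}}_{\tilde{\Omega}\times \tilde{T}}$ and $\dint{\psi_k^n, \nu \pderiv{\psi_k^n}{\xi}}_{\tilde{\Omega}\times \tilde{T}}$, together with the volume contribution $\dint{(\psi_k^n)^2}_{\tilde{\Omega}}|_{t^{n-1}}$, the boundary term $-\dint{\nu(\psi_k^n)^2}_{\tilde{T}}|_{x_{k+1}}$, and the target cross-term $\dint{\psi_k^n, \nu(\tilde{q}_{k-1} - q^{n}_k)}_{\tilde{T}}|_{x_k}$ on the right-hand side.

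Next I would apply the discrete SBP (product-rule) identity, which for any polynomial $u$ in the GLL-collocated tensor-product Lagrange basis reads $\dint{u, \pderiv{u}{\tau}}_{\tilde{\Omega}\times \tilde{T}} = \tfrac{1}{2}\dint{u^2}_{\tilde{\Omega}}|_{t^{n}} - \tfrac{1}{2}\dint{u^2}_{\tilde{\Omega}}|_{t^{n-1}}$ and analogously $\dint{u, \pderiv{u}{\xi}}_{\tilde{\Omega}\times \tilde{T}} = \tfrac{1}{2}\dint{u^2}_{\tilde{T}}|_{x_{k+1}} - \tfrac{1}{2}\dint{u^2}_{\tilde{T}}|_{x_k}$. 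Using these with $u = \psi_k^n$ replaces the two volume derivative terms by four boundary squares. The two contributions at $t^{n-1}$ combine into a single $\dint{(\psi_k^n)^2}_{\tilde{\Omega}}|_{t^{n-1}}$ term with coefficient $\tfrac{1}{2}$, while the two contributions at $x_{k+1}$ combine into a single $\dint{\nu(\psi_k^n)^2}_{\tilde{T}}|_{x_{k+1}}$ term with coefficient $-\tfrac{1}{2}$. Rearranging to isolate the $x_{k+1}$ and $t^{n-1}$ terms on the left-hand side produces exactly the claimed identity (reading the subscripts $\tilde{\Omega}|_{x_k}$ on the last two terms of the statement as $\tilde{T}|_{x_k}$, since they evaluate at a single spatial point).

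The only non-routine ingredient is the exactness of the SBP identity for the GLL basis: GLL quadrature is not exact on $u^2$ when $u$ has polynomial degree $p$, but the diagonal-norm GLL differentiation operator nevertheless satisfies the SBP property exactly, so the discrete product rule holds without any quadrature error under $\dint{\cdot,\cdot}$. Once this observation is invoked, everything else reduces to careful sign tracking, and I do not anticipate a substantive obstacle beyond bookkeeping the coefficients of each boundary contribution.
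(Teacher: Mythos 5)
Your proof is correct and follows exactly the paper's argument: substitute $\phi=\psi_k^n$ into \eqref{eq:pred-corr-diff-1D-1} and apply the discrete SBP property in both space and time, then rearrange. Your coefficient bookkeeping checks out, and you rightly note both the exactness of SBP for the GLL diagonal-norm operator and the typographical slip in the lemma statement (the $\tilde{\Omega}|_{x_k}$ subscripts on the last two terms should read $\tilde{T}|_{x_k}$).
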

\begin{proof}
    Substituting $\phi=\psi^n_k$ in \eqref{eq:pred-corr-diff-1D-1} and using SBP in both space and time yields the desired result.
\end{proof}

With these preliminary results, we are now ready to prove the following theorem, which states that the energy growth of the 1D method depends only on the difference between the corrector and the predictor, $\psi$, and the jumps in the predictor $\tilde{q}$. When combined with \eqref{eq:pred-corr-diff-1D-2}, this enables us to establish numerical stability for a given polynomial order and CFL number.
\begin{theorem} \label{theorem:energy-1-step}
The energy change after one step of the 1D method,
{
\scriptsize
\eqref{eq:ref-1D-1}-\eqref{eq:ref-1D-2}, is
    \begin{equation}
        \Delta E^n = \sum_k{\dint{\tfrac{1}{2}\left(q^{n}_k\right)^2}_{\tilde{\Omega}}\bigg|_{t^{n}} - \dint{\tfrac{1}{2}\left(q^{n-1}_k\right)^2}_{\tilde{\Omega}}\bigg|_{t^{n-1}}} = \sum_k{
        \dint{\tfrac{1}{2}\left(\psi^n_k\right)^2}_{\tilde{\Omega}}\bigg|_{t^{n}} - \dint{\tfrac{1}{2}\nu\left(\tilde{q}_{k-1} - \tilde{q}_k\right)^2}_{\tilde{T}}\bigg|_{x_k}}.
        \label{eq:1D-energy-change}
    \end{equation}
    }

\end{theorem}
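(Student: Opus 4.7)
The plan is to derive the energy identity by testing the corrector equation \eqref{eq:ref-1D-3} against $\phi = q^n_k$, then use Lemma 4.2 together with Lemma 4.1 to collapse all the spurious boundary contributions and isolate the two clean terms on the right-hand side of \eqref{eq:1D-energy-change}.

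First I would substitute $\phi = q^n_k$ into \eqref{eq:ref-1D-3} and apply summation-by-parts in both $\tau$ and $\xi$ on the volume terms. The temporal SBP combined with the upwind time-interface term produces $\tfrac{1}{2}\langle (q^n_k)^2\rangle_{\tilde\Omega}|_{t^n} - \tfrac{1}{2}\langle (q^{n-1}_k)^2\rangle_{\tilde\Omega}|_{t^{n-1}} + \tfrac{1}{2}\langle (q^n_k-q^{n-1}_k)^2\rangle_{\tilde\Omega}|_{t^{n-1}}$, where the first two pieces are exactly $\Delta E^n_k$. The spatial SBP combined with the interface flux contributions at $x_{k+1}$ and $x_k$ can be reorganised by completing the square: the $x_{k+1}$-boundary collapses to $-\tfrac{\nu}{2}\langle(\psi^n_k)^2\rangle_{\tilde T}|_{x_{k+1}} + \tfrac{\nu}{2}\langle \tilde q_k^2\rangle_{\tilde T}|_{x_{k+1}}$, and the $x_k$-boundary to $\tfrac{\nu}{2}\langle (q^n_k-\tilde q_{k-1})^2\rangle_{\tilde T}|_{x_k} - \tfrac{\nu}{2}\langle \tilde q_{k-1}^2\rangle_{\tilde T}|_{x_k}$. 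Summing over $k$, the $\tilde q_k^2$ and $\tilde q_{k-1}^2$ contributions cancel after a one-cell index shift, leaving
\begin{equation*}
\Delta E^n = -\sum_k \tfrac{1}{2}\langle(q^n_k-q^{n-1}_k)^2\rangle_{\tilde\Omega}|_{t^{n-1}} + \sum_k \tfrac{\nu}{2}\langle(\psi^n_k)^2\rangle_{\tilde T}|_{x_{k+1}} - \sum_k \tfrac{\nu}{2}\langle(q^n_k-\tilde q_{k-1})^2\rangle_{\tilde T}|_{x_k}.
\end{equation*}

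Next I would eliminate the downstream boundary term using Lemma 4.2, which rewrites $\tfrac{\nu}{2}\langle(\psi^n_k)^2\rangle|_{x_{k+1}}$ in terms of temporal $\psi^n_k$ energies, the $x_k$ boundary $\psi^n_k$ energy, and the cross product $\nu\langle\psi^n_k,\tilde q_{k-1}-q^n_k\rangle|_{x_k}$. Then I would expand $q^n_k - \tilde q_{k-1} = \psi^n_k + (\tilde q_k - \tilde q_{k-1})$ in the remaining $x_k$ square; the cross product produced by this expansion exactly cancels the one arriving from Lemma 4.2, while the $(\psi^n_k)^2$ pieces at $x_k$ combine with coefficients $\nu - \tfrac{\nu}{2} - \tfrac{\nu}{2} = 0$ and disappear. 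What survives is
\begin{equation*}
\Delta E^n = -\sum_k \tfrac{1}{2}\langle(q^n_k-q^{n-1}_k)^2\rangle_{\tilde\Omega}|_{t^{n-1}} + \sum_k \tfrac{1}{2}\langle(\psi^n_k)^2\rangle_{\tilde\Omega}|_{t^{n-1}} + \sum_k \tfrac{1}{2}\langle(\psi^n_k)^2\rangle_{\tilde\Omega}|_{t^n} - \sum_k \tfrac{\nu}{2}\langle(\tilde q_k-\tilde q_{k-1})^2\rangle_{\tilde T}|_{x_k}.
\end{equation*}

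Finally, Lemma 4.1 gives $\tilde q_k|_{\tau=-1} = q^{n-1}_k|_{\tau=1}$, so $\psi^n_k|_{t^{n-1}} = q^n_k - q^{n-1}_k|_{t^{n-1}}$ and the first two sums cancel, leaving precisely the claimed identity. The main obstacle I expect is the careful bookkeeping in step one: there are two SBP applications, two interface boundary integrals, two completed squares, and a global reindexing, and a sign error in any of these would spoil the coefficients that make the $(\psi^n_k)^2|_{x_k}$ and cross-product terms vanish after invoking Lemma 4.2. Once those cancellations are confirmed, the reduction via Lemma 4.1 is immediate.
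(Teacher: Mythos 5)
Your proposal is correct and follows essentially the same route as the paper's proof: testing the corrector equation with $\phi=q^n_k$, applying SBP in space and time, completing squares with the periodic telescoping of the upwind trace, and then invoking Lemmas 4.2 and 4.1 to collapse the remaining boundary terms. The only cosmetic difference is that you defer Lemma 4.1 (identifying $\psi^n_k|_{t^{n-1}}$ with $\left(q^n_k-q^{n-1}_k\right)|_{t^{n-1}}$) to the final cancellation rather than applying it mid-derivation.
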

\begin{proof}
    Substituting $\phi=q^{n}_k$ into \eqref{eq:ref-1D-2} and using SBP in both space and time yields
    \begin{equation}
        \dint{\tfrac{1}{2}\left(q^{n}_k\right)^2}_{\tilde{\Omega}}\bigg|_{t^{n}} = \dint{\nu\left(\left(\tfrac{1}{2}q^{n}_k\right)^2 - q^{n}_k\hat{\tilde{q}}\right)}_{\tilde{T}}\bigg|_{x_k}^{x_{k+1}} - \dint{\tfrac{1}{2}\left(q^{n}_k\right)^2 - q^{n}_kq^{n-1}_k}_{\tilde{\Omega}}\bigg|_{t^{n-1}}, 
    \end{equation}
    therefore
    {
\scriptsize
    \begin{equation}
        \dint{\tfrac{1}{2}\left(q^{n}_k\right)^2}_{\tilde{\Omega}}\bigg|_{t^{n}} - \dint{\tfrac{1}{2}\left(q^{n-1}_k\right)^2}_{\tilde{\Omega}}\bigg|_{t^{n-1}} = \dint{\nu\left(\left(\tfrac{1}{2}q^{n}_k\right)^2 - q^{n}_k\hat{\tilde{q}}_k\right)}_{\tilde{T}}\bigg|_{x_k}^{x_{k+1}} - \dint{\tfrac{1}{2}\nu\left(q^{n}_k - q^{n-1}_k\right)^2}_{\tilde{\Omega}}\bigg|_{t^{n-1}}. 
    \end{equation}
    }
    Noting that as $\hat{\tilde{q}}$ is continuous across element boundaries, in a periodic domain $\sum_k{\dint{\tfrac{1}{2}\nu\hat{\tilde{q}}_k^2}_{\tilde{T}}\bigg|_{x_k}^{x_{k+1}}} = 0$, therefore
    {
\scriptsize
    \begin{equation}
        \Delta E^n = \Delta E^n + \sum_k{\dint{\tfrac{1}{2}\nu\hat{\tilde{q}}_k^2}_{\tilde{T}}\bigg|_{x_k}^{x_{k+1}}} = \sum_k{\dint{\tfrac{1}{2}\nu\left(q^{n}_k - \hat{\tilde{q}}_k\right)^2}_{\tilde{T}}\bigg|_{x_k}^{x_{k+1}} - \dint{\tfrac{1}{2}\left(q^{n}_k - q^{n-1}_k\right)^2}_{\tilde{\Omega}}\bigg|_{t^{n-1}}}.
    \end{equation}
    }
We apply Lemma 4.1, substituting $q^{n-1}|_{t^{n-1}} = \tilde{q}|_{t^{n-1}}$, and also use $\hat{\tilde{q}}|_{x_k} = \tilde{q}_{k-1}|_{x_k}$ to obtain
{
\scriptsize
\begin{equation}
    \Delta E^n = \sum_k{\dint{\tfrac{1}{2}\nu\left(q^{n}_k - \tilde{q}_k\right)^2}_{\tilde{T}}\bigg|_{x_{k+1}} - \dint{\tfrac{1}{2}\nu\left(q^{n}_k - \tilde{q}_{k-1}\right)^2}_{\tilde{T}}\bigg|_{x_k} - \dint{\tfrac{1}{2}\left(q^{n}_k - \tilde{q}_k\right)^2}_{\tilde{\Omega}}\bigg|_{t^{n-1}}}.
\end{equation}
}
Substituting the definition of $\psi^n_k = q^n_k - \tilde{q}_k$ gives
\begin{equation}
    \Delta E^n = \sum_k{\dint{\tfrac{1}{2}\nu\left(\psi_k^n\right)^2}_{\tilde{T}}\bigg|_{x_{k+1}} - \dint{\tfrac{1}{2}\nu\left(q^{n}_k - \tilde{q}_{k-1}\right)^2}_{\tilde{T}}\bigg|_{x_k} - \dint{\tfrac{1}{2}\left(\psi_k^n\right)^2}_{\tilde{\Omega}}\bigg|_{t^{n-1}}}.
\end{equation}
We then apply Lemma 4.2, giving
\begin{equation}
\Delta E^n = \sum_k{\dint{\tfrac{1}{2}\left(\psi_k^n\right)^2}_{\tilde{\Omega}}\bigg|_{t^{n}} 
    -\dint{\nu\left(\tfrac{1}{2}\left(\psi_k^n\right)^2 + \psi\left(\tilde{q}_{k-1} - q^{n}_k\right) + \tfrac{1}{2}\left(\tilde{q}_{k-1} - q^{n}_k\right)^2\right)}_{\tilde{T}}\bigg|_{x_k}
    },
\end{equation}
expanding $\psi^n_k = q^n_k - \tilde{q}_k$ in the second term yields
\begin{equation}
    \Delta E^n = \sum_k{\dint{\tfrac{1}{2}\left(\psi_k^n\right)^2}_{\tilde{\Omega}}\bigg|_{t^{n}} 
    -\dint{\tfrac{1}{2}\nu\left(\tilde{q}_k - \tilde{q}_{k-1}\right)^2}_{\tilde{T}}\bigg|_{x_k}
    },
\end{equation}
as required.
\end{proof}

This analysis can be easily be extended to non-periodic boundary conditions by treating the leftmost and rightmost element interfaces separately.

\subsection{Stability of the eigenvalue problem}

We now show how to use Theorem 1 to prove numerical stability for a given polynomial order and CFL number. To do this we express the nodal values of the difference between the corrector and the predictor at $t^n$, $\psi_k^n|_{t^n}$, as a linear combination of the nodal values of the jump in the predictor at the left element boundary $x_k$, $\left(\tilde{q}_{k-1} - \tilde{q}_k\right)|_{x_k}$. Let $\boldsymbol{\gamma}_k$ and $\boldsymbol{\beta}_k$ be the $(p+1)$ dimension vectors containing these nodal values, explicitly
\begin{equation}
    \left(\gamma_k\right)_i = \psi^n_k\left(\xi=\xi_i, \tau=1\right),
\end{equation} 
\begin{equation}
    \left(\beta_k\right)_i = \tilde{q}_{k-1}\left(\xi=1, \tau=\tau_i\right) - \tilde{q}_{k}\left(\xi=-1, \tau=\tau_i\right),
\end{equation} 
where $\xi_i$ and $\tau_i$ are the 1D GLL quadrature points in space and time. From \eqref{eq:pred-corr-diff-1D-2}, $\psi_k$ depends linearly on $\left(\tilde{q}_{k-1} - \tilde{q}_k\right)|_{x=x_k}$, therefore $\boldsymbol{\gamma}_k$ depends linearly on $\boldsymbol{\beta}_k$ and we can construct a matrix $A$ such that
\begin{equation}
\boldsymbol{\gamma}_k = \sqrt{\nu} A \boldsymbol{\beta}_k,
\end{equation}
note we have taken the factor $\nu$ outside of $A$ to simplify the proof below.

To construct A we first re-write \eqref{eq:pred-corr-diff-1D-2} in matrix form. Let $M$ be the $(p+1)^2\times(p+1)^2$ matrix such 
\begin{equation}
    M_{ij} = \dint{\phi_i, \pderiv{\phi_j}{\tau}}_{\tilde{\Omega} \times \tilde{T}} + \dint{\phi_i, \nu \pderiv{\phi_j}{\xi}}_{\tilde{\Omega} \times \tilde{T}} + \dint{\phi_i, \phi_j}_{\tilde{\Omega}}\bigg|_{t^{n-1}} - \dint{\phi_i, \nu \phi_j}_{\tilde{T}}\bigg|^{x_{k+1}}_{x_k} 
    \label{eq:M-pred-corr-diff-1D-2},
\end{equation}
where $\phi_i$ are the space-time basis functions,
and $L$ be the $(p+1)^2 \times (p+1)$ matrix
\begin{equation}
    L_{ij} = \dint{\phi_i, l_j(\tau)}_{\tilde{T}}\bigg|_{\xi=-1},
\end{equation}
where $l_j$ is the Lagrange polynomial interpolating the $jth$ GLL quadrature point. Let $\boldsymbol{\psi}_k$ be the vector containing the nodal values of $\psi_k$, therefore \eqref{eq:pred-corr-diff-1D-2} can be written 
\begin{equation}
    M\boldsymbol{\psi}_k = \nu L\boldsymbol{\beta}_k.
\end{equation}
The vector $\boldsymbol{\gamma}_k$ contains the nodal values of $\psi_k^n$ at the $\tau=1$ boundary, let $S$ be the $(p+1) \times (p+1)^2$ matrix which selects these values. Explicitly,
\begin{equation}
    S_{ij} = \phi_j\left(\xi=\xi_i, \tau=1
\right).
\end{equation}
Therefore $\boldsymbol{\gamma}_k = S \boldsymbol{\psi}_k$ and
\begin{equation}
    \boldsymbol{\gamma}_k = \nu SM^{-1}L\boldsymbol{\beta}_k,
\end{equation}
and we construct $A$ as
\begin{equation}
A = \sqrt{\nu} SM^{-1}L.
\end{equation}

The main result of this section is that $\norm{A}_w \leq 1$ implies that our method is numerically stable, where $\norm{\boldsymbol{v}}_w = \sqrt{\sum_i{v_i^2 w_i}}$ is the $L^2$ norm weighted with the 1D GLL quadrature weights $w_i$ and $\norm{A}_w$ is the corresponding vector induced norm. We present this result in the following theorem.


\begin{theorem}
    Under the assumption that $\norm{A}_w \leq 1$ when $\nu \leq 1$, the 1D method \eqref{eq:ref-1D-1}-\eqref{eq:ref-1D-2} is numerically stable for all $\nu \leq 1$, that is
    \begin{equation}
        \dint{\tfrac{1}{2}\left(q^n_k\right)^2}_{\tilde{\Omega}}\bigg|_{t^{n}} - \dint{\tfrac{1}{2}\left(q^{n-1}_k\right)^2}_{\tilde{\Omega}}\bigg|_{t^{n-1}} \leq 0.
    \end{equation}
\end{theorem}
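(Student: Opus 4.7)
The plan is to combine the energy identity from Theorem \ref{theorem:energy-1-step} with the linear relation $\boldsymbol{\gamma}_k = \nu A \boldsymbol{\beta}_k$, rewrite both surviving contributions as weighted Euclidean norms of nodal vectors, and close the estimate with the hypothesised operator bound on $A$.

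First I would unpack the two discrete integrals that appear in Theorem \ref{theorem:energy-1-step} using the GLL tensor-product basis. Because the Lagrange basis is collocated with its GLL quadrature points, the discrete inner product $\dint{\cdot,\cdot}$ on the spatial slice $t=t^n$ and on the temporal face $x=x_k$ reduces exactly to the $w$-weighted Euclidean inner product on the corresponding nodal coefficient vectors. Hence
\begin{equation}
\dint{\tfrac{1}{2}(\psi^n_k)^2}_{\tilde{\Omega}}\bigg|_{t^{n}} = \tfrac{1}{2}\|\boldsymbol{\gamma}_k\|_w^2,\qquad \dint{\tfrac{1}{2}\nu(\tilde{q}_{k-1}-\tilde{q}_k)^2}_{\tilde{T}}\bigg|_{x_k} = \tfrac{1}{2}\nu\|\boldsymbol{\beta}_k\|_w^2,
\end{equation}
and the energy identity collapses to $\Delta E^n = \tfrac{1}{2}\sum_k\bigl(\|\boldsymbol{\gamma}_k\|_w^2 - \nu\|\boldsymbol{\beta}_k\|_w^2\bigr)$.

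Next I would close the estimate termwise. Substituting $\boldsymbol{\gamma}_k = \nu A\boldsymbol{\beta}_k$, applying the induced operator norm, and invoking the hypothesis $\|A\|_w\leq 1$ together with $\nu\leq 1$ gives
\begin{equation}
\|\boldsymbol{\gamma}_k\|_w^2 \leq \nu^2\|A\|_w^2\|\boldsymbol{\beta}_k\|_w^2 \leq \nu^2\|\boldsymbol{\beta}_k\|_w^2 \leq \nu\|\boldsymbol{\beta}_k\|_w^2,
\end{equation}
so each summand of $\Delta E^n$ is non-positive, which is exactly the claimed stability bound.

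The only delicate step is the bookkeeping at the start: I need to be careful that, for the GLL-collocated basis, the discrete integrals in Theorem \ref{theorem:energy-1-step} really are the $w$-weighted Euclidean norms of the particular nodal vectors $\boldsymbol{\gamma}_k$ and $\boldsymbol{\beta}_k$ constructed in the appendix, and that the surface inner product on the $x_k$-face uses the same GLL weights that define $\|\cdot\|_w$. Once that identification is pinned down, the closing argument is a chain of elementary inequalities in which $\|A\|_w^2\leq 1$ and $\nu^2\leq\nu$ both tilt in the energy-decreasing direction, so no sign-tracking subtlety remains.
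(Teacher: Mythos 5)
Your proposal is correct and follows essentially the same route as the paper: both invoke Theorem \ref{theorem:energy-1-step}, identify the two discrete integrals with the $w$-weighted nodal norms $\tfrac{1}{2}\norm{\boldsymbol{\gamma}_k}_w^2$ and $\tfrac{1}{2}\nu\norm{\boldsymbol{\beta}_k}_w^2$, substitute $\boldsymbol{\gamma}_k=\nu A\boldsymbol{\beta}_k$, and close with $\norm{A}_w\leq 1$ and $\nu^2\leq\nu$. The collocation bookkeeping you flag as the delicate step is exactly the line the paper writes as $\tfrac{1}{2}\sum_k\sum_i\left(\left(\gamma_k\right)_i^2 w_i-\nu\left(\beta_k\right)_i^2 w_i\right)$, so there is no substantive difference.
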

\begin{proof}
    Using theorem 4.1 we obtain
    {
\scriptsize
    \begin{equation}
    \begin{split}
\sum_k{\dint{\tfrac{1}{2}\left(q^{n+1}_k\right)^2}_{\tilde{\Omega}}\bigg|_{t^{n}} - \dint{\tfrac{1}{2}\left(q^{n}_k\right)^2}_{\tilde{\Omega}}\bigg|_{t^{n-1}}} &= \sum_k{
        \dint{\tfrac{1}{2}\left(\psi^n_k\right)^2}_{\tilde{\Omega}}\bigg|_{t^{n}} - \dint{\tfrac{1}{2}\nu\left(\tilde{q}_{k-1} - \tilde{q}_k\right)^2}_{\tilde{T}}\bigg|_{x_k}}\\
&=\frac{1}{2}\sum_k{\sum_i{\left(\left(\gamma_k\right)_i^2 w_i-\nu\left(\beta_k\right)_i^2 w_i\right)}} \\
&= \frac{1}{2}\sum_k{\left(\norm{\boldsymbol{\gamma}_k}_w^2 - \nu \norm{\boldsymbol{\beta}_k}_w^2\right)} \\
&= \frac{\nu}{2}\sum_k{\left(\norm{A\boldsymbol{\beta}_k}_w^2 - \norm{\boldsymbol{\beta}_k}_w^2\right)} \\
&\leq 0.
\end{split}
\end{equation}
}
\end{proof}
We have verified that $\norm{A}_w \leq 1$ for order $p\leq 14$ and variety of CFL numbers $\nu$ up to and including $\nu =1$, these results are shown in figure \ref{fig:A-norm}.

\begin{figure}[!hbtp]\begin{center}
	\includegraphics[width=0.8\textwidth]{./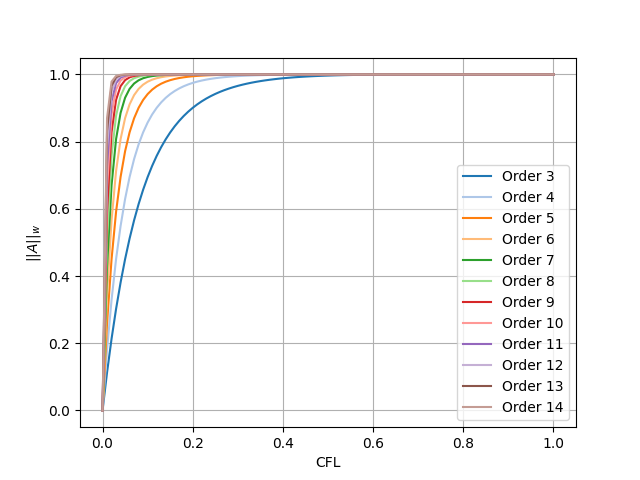}
 \caption{Numerically calculated $\norm{A}_w$.}
  \label{fig:A-norm}
\end{center}
\end{figure}

\section{Von Neumann Stability Analysis}

In this section, we evaluate the stability region of our method on regular Cartesian meshes with constant advection speed $\boldsymbol{u}$ using a numerical von Neumann analysis as in \cite{guthrey2019regionally}. Let $\nu := \frac{u}{\Delta x}$ represent the CFL number in 1D, and $\nu_i := \frac{u_i}{\Delta x_i}$ in higher dimensions. Our goal is to determine the maximum CFL value $\norm{\boldsymbol{\nu}}_2 = \sqrt{\sum_i{\nu_i^2}}$ for which our method remains stable. Note that this analysis also generalises to advection at constant speed on an affine grid, with the CFL instead defined as $\nu_i:=\boldsymbol{u}\cdot \nabla \xi_i$. 


In 1 spatial dimension, we conducted a von Neumann analysis using $100$ evenly sampled wave numbers for a range of different CFLs. Figure \ref{fig:amplification-1D-minus-1} shows the amplification factor minus 1 for order 3-14. A sharp transition is observed at a CFL value of 1. Notably, our von Neumann analysis becomes unstable in double precision for orders greater than 11. However, we verified stability for orders 12, 13, and 14 by increasing the precision.


\begin{figure}[!hbtp]\begin{center}
	\includegraphics[width=0.8\textwidth]{./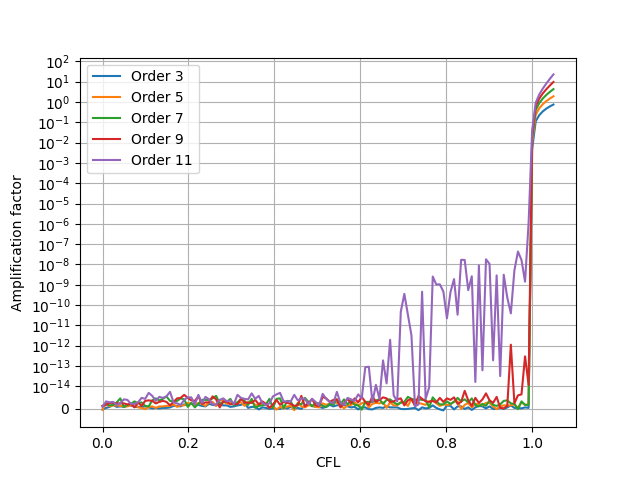}
 \caption{Amplification factor minus 1 for the 1D scheme.}
  \label{fig:amplification-1D-minus-1}
\end{center}
\end{figure}


In higher dimensions, we are interested in the maximum $\norm{\boldsymbol{\nu}}$ for which our method remains stable. To simplify the problem, we note that the most challenging case for a given $\norm{\boldsymbol{\nu}}$ occurs when $\nu_1 = \nu_2$. Therefore, in 2 spatial dimensions we perform our von Neumann analysis with $\nu_1 = \nu_2$ and $100^2$ evenly sampled wave Numbers. Figure \ref{fig:amplification-2D-1} shows the amplification factor minus 1. The scheme is stable for CFL values of around $0.7 \approx 1 / \sqrt{2}$. 


\begin{figure}[!hbtp]\begin{center}
	\includegraphics[width=0.8\textwidth]{./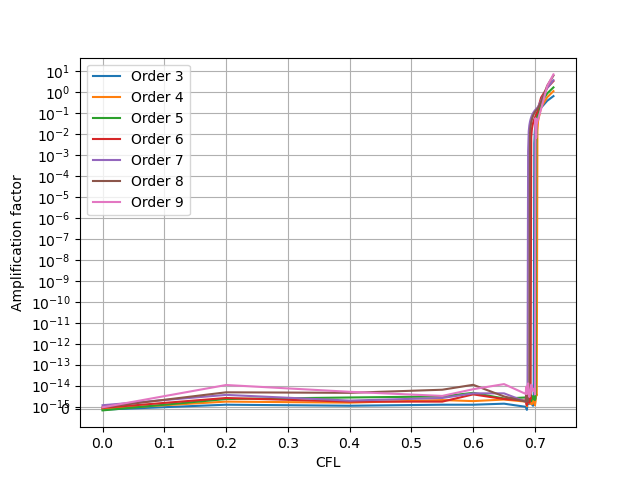}
 \caption{Amplification factor minus 1 for the 2D scheme.}
  \label{fig:amplification-2D-1}
\end{center}
\end{figure}

Similarly in 3 spatial dimensions, we conduct our von Neumann analysis with $\nu_1 = \nu_2 = \nu_3$ and $50^3$ evenly sampled wave Numbers. Figure \ref{fig:amplification-3D-1} shows the amplification factor minus 1. The scheme is stable for CFL values of around $0.58 \approx 1 / \sqrt{3}$. 


\begin{figure}[!hbtp]\begin{center}
	\includegraphics[width=0.8\textwidth]{./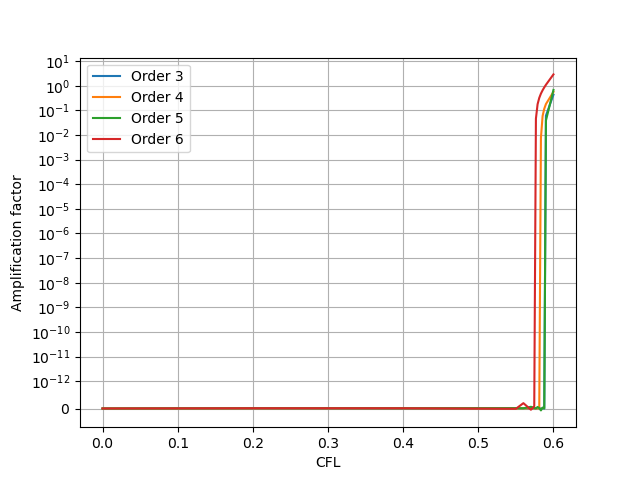}
 \caption{Amplification factor minus 1 for the 3D scheme.}
  \label{fig:amplification-3D-1}
\end{center}
\end{figure}

\section{Numerical results}

In this section we verify the convergence of our method with 2D linear advection, 3D linear advection, 2D Burgers equation, and cubed sphere advection test cases. 

\subsection{2D linear advection}

We verify the convergence of our 2D method by simulating a simple advection test case in the domain $[0,2]^2$ with periodic boundaries. The velocity field is $\boldsymbol{u} = [1, 1]$, and the initial condition is given by
\begin{equation}
	q(\vb{x},0) = \sin{\left(16\pi x\right)}\sin{\left(16\pi y\right)}.
\end{equation}
The simulation runs until $t=2$ with an element CFL $\norm{\boldsymbol{\nu}}_2 = 0.6$. Figure \ref{fig:convergence-advection-2D} shows the relative $L^2$ errors, demonstrating that our method converges at the quasi-optimal rate of $p+\frac{1}{2}$. 

\begin{figure}[!hbtp]\begin{center}
	\includegraphics[width=0.8\textwidth]{./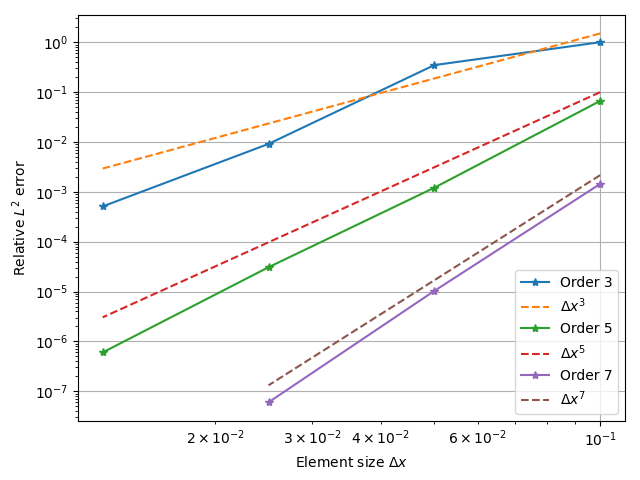}
 \caption{Relative $L^2$ errors for 2D linear advection.}
  \label{fig:convergence-advection-2D}
\end{center}
\end{figure}

\subsection{3D linear advection}

Next, we verify the convergence of our 3D method by simulating a similar advection test case in the domain $[0,2]^3$ with periodic boundaries. The velocity field is $\boldsymbol{u} = [1, 1, 1]$, and the initial condition is given by
\begin{equation}
	q(\vb{x},0) = \sin{\left(2\pi x\right)}\sin{\left(2\pi y\right)}\sin{\left(2\pi z\right)}.
\end{equation}
The simulation runs until $t=2$ with an element CFL $\norm{\boldsymbol{\nu}}_2 = 0.55$. Figure \ref{fig:convergence-advection-3D} shows the relative $L^2$ errors, demonstrating that our method converges at the quasi-optimal rate of $p+\frac{1}{2}$. 

\begin{figure}[!hbtp]\begin{center}
	\includegraphics[width=0.8\textwidth]{./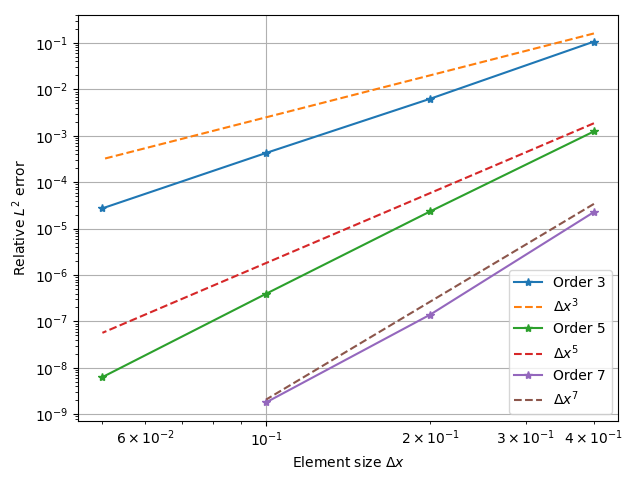}
 \caption{Relative $L^2$ errors for 3D linear advection.}
  \label{fig:convergence-advection-3D}
\end{center}
\end{figure}

\subsection{2D Burger's equation}

We extend our method to the Burgers equation by using an entropy stable DG-SEM spatial discretisation. The simulation is performed on the domain $[0,2\pi]^2$ with the initial condition
\begin{equation}
	q(\vb{x},0) = \frac{1}{4} \left(1 - \cos\left(x\right)\right) \left(1 - \cos\left(y\right)\right)\end{equation}
until $t=0.4$, stopping shortly before a shock develops. The simulation is run with an element CFL $\norm{\boldsymbol{\nu}}_2 = 0.6$. We compute an exact solution using the method of characteristics and use this to evaluate the error. Figure \ref{fig:convergence-burgers-2D} shows the relative $L^2$ error, again demonstrating that that our method converges at the quasi-optimal rate of $p+\frac{1}{2}$. 

\begin{figure}[!hbtp]\begin{center}
	\includegraphics[width=0.8\textwidth]{./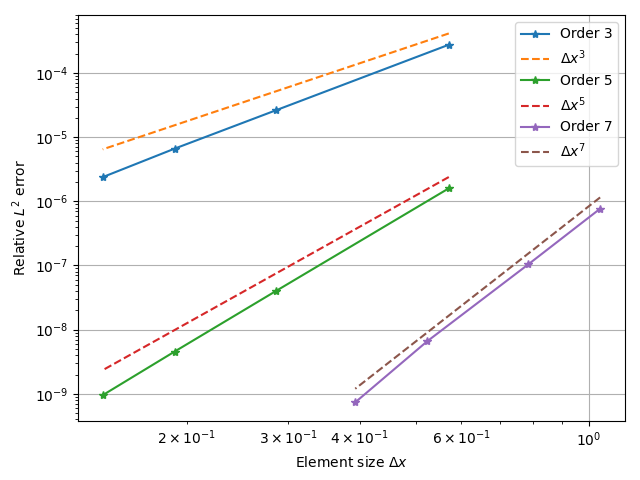}
 \caption{Relative $L^2$ errors for 2D Burger's equation.}
  \label{fig:convergence-burgers-2D}
\end{center}
\end{figure}






\subsection{Cubed sphere advection}

We have also evaluated our scheme on a cubed sphere using the standard test case from \cite{lauritzen2012standard}.This case advects two Gaussian hills advected with a divergence-free flow, causing them to roll up (Figure \ref{fig:flow-cubed-sphere}), before unrolling and returning to their initial state. Simulations are performed with an element CFL of $\norm{\boldsymbol{\nu}}_2 = 0.65$. The $L^2$ error is computed as the difference between the initial and final states and is reported in Figure \ref{fig:convergence-cubed-sphere}. Interestingly, for this more complex test on curvilinear geometry, our scheme achieves a convergence rate that exceeds the optimal order of $p+1$.

\begin{figure}[!hbtp]\begin{center}
	\includegraphics[width=0.8\textwidth]{./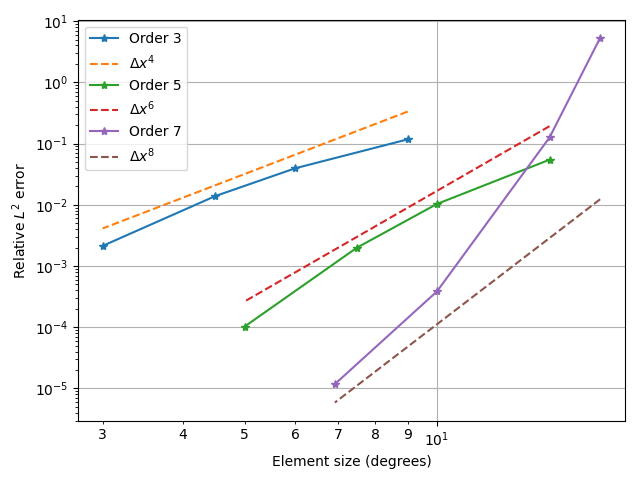}
 \caption{Relative $L^2$ errors for 2D cubed sphere advection.}
  \label{fig:flow-cubed-sphere}
\end{center}
\end{figure}

\begin{figure}[!hbtp]\begin{center}
	\includegraphics[width=\textwidth]{./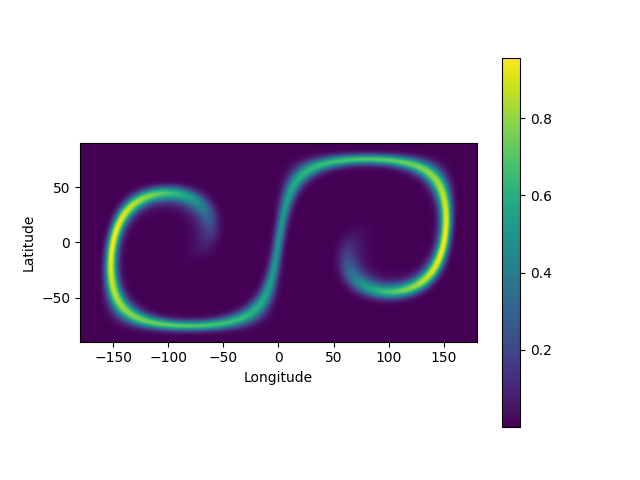}
 \caption{Cubed sphere advection test case at $t=2.5$ with $6\times 13^2$ 7th order elements.}
  \label{fig:convergence-cubed-sphere}
\end{center}
\end{figure}

\section{Conclusion}

In this paper, we introduced a novel locally implicit DG method for transport problems that employs a series of element-local implicit problems, enabling a maximum stable time step that is independent of the polynomial order. This approach significantly increases the maximum stable time step compared to the standard ADER-DG method, thereby reducing the number of communication steps required for simulations and enhancing scalability. We rigorously proved stability in 1D for $\nu \leq 1$, and in 2D and 3D, we derived the maximum stable time step through a semi-analytical von Neumann analysis. The method was validated through a series of linear and nonlinear tests, demonstrating convergence at quasi-optimal order.

Future work will focus on integrating this transport method into numerical solvers for the Euler equations, as well as extending this method to more general systems of hyperbolic problems.

\section*{Declaration of competing interest}

The authors declare that they have no known competing financial interests or personal relationships that could have appeared to influence the work reported in this paper.

\section*{Acknowledgments}
Kieran Ricardo would like to acknowledge the Australian Government through the Australian Government Research Training Program (RTP) Scholarship, and the Bureau of Meteorology through research contract KR2326. The authors would also like to thank David Lee for his insightful comments and discussions.


\clearpage

\printbibliography

\end{document}